%
\documentclass[12pt, reqno]{amsart}
\usepackage{amsmath, amsthm, amscd, amsfonts, amssymb, graphicx, color}
\usepackage[bookmarksnumbered, colorlinks, plainpages]{hyperref}
\hypersetup{colorlinks=true,linkcolor=red, anchorcolor=green, citecolor=cyan, urlcolor=red, filecolor=magenta, pdftoolbar=true}

\textheight 22.5truecm \textwidth 14.5truecm
\setlength{\oddsidemargin}{0.35in}\setlength{\evensidemargin}{0.35in}

\setlength{\topmargin}{-.5cm}

\newtheorem{theorem}{Theorem}[section]
\newtheorem{lemma}[theorem]{Lemma}
\newtheorem{proposition}[theorem]{Proposition}
\newtheorem{corollary}[theorem]{Corollary}
\theoremstyle{definition}
\newtheorem{definition}[theorem]{Definition}
\newtheorem{example}[theorem]{Example}

\theoremstyle{remark}
\newtheorem{remark}[theorem]{Remark}
\numberwithin{equation}{section}

\newcommand{\la}{\langle}
\newcommand{\ra}{\rangle}

\begin{document}
\setcounter{page}{1}

\title[Finite extensions of Bessel sequences]{Finite extensions of Bessel sequences}

\author[D. Baki\' c, T. Beri\' c]{Damir Baki\' c$^1$ and Tomislav Beri\' c$^1$}

\address{$^{1}$ Department of Mathematics, University of Zagreb, Bijeni\v{c}ka 30, 10000 Zagreb, Croatia.}
\email{\textcolor[rgb]{0.00,0.00,0.84}{bakic@math.hr;
tberic@math.hr}}

\subjclass[2010]{Primary 42C15; Secondary 47A05.}

\keywords{Bessel sequences, frames, Parseval frames.}

\date{Received: xxxxxx; Revised: yyyyyy; Accepted: zzzzzz.}

\begin{abstract}
The paper studies finite extensions of Bessel sequences in infinite-dimensional Hilbert spaces. We provide a characterization of Bessel sequences that can be extended to frames by adding finitely many vectors. We also characterize frames that can be converted to Parseval frames by finite-dimensional perturbations. Finally, some results on excesses of frames and near-Riesz bases are derived.
\end{abstract} \maketitle

\section{Introduction and preliminaries}

A sequence $(f_n)_{n=1}^{\infty}$ in a Hilbert space $H$ is a frame for $H$ if there exist positive constants $A$ and $B$, that are called frame bounds, such that
\begin{equation}\label{frame def}
A\|x\|^2\leq \sum_{n=1}^{\infty}|\la x,f_n\ra |^2\leq B\|x\|^2,\,\forall x \in H.
\end{equation}
Frame bounds are not unique. The optimal upper frame bound is the infimum over all upper frame bounds, and the optimal lower frame bound is the supremum over all lower frame bounds.  If $A=B$ we say that frame is tight and, in particular, if $A=B=1$ so that
\begin{equation}
\sum_{n=1}^{\infty}|\la x,f_n\ra |^2=\|x\|^2,\,\forall x \in H,
\end{equation}
we say that $(f_n)_{n=1}^{\infty}$ is a Parseval frame.

A sequence $(f_n)_{n=1}^{\infty}$ in $H$ is called a Bessel sequence if only the right hand inequality in (\ref{frame def}) is satisfied. In that case $B$ is called a Bessel bound. The optimal Bessel bound is the infimum over all Bessel bounds.

Frames were first introduced by Duffin and Schaeffer (\cite{DS}). Today frames play important roles in many applications in mathematics, science and engineering. For general information and basic facts about frames we refer the reader to \cite{Cas}, \cite{Chr}, \cite{Dau} and \cite{HW}.

Extensions principles of Bessel sequences to frames are known from the literature in frame theory. In \cite{CL} Casazza and Leonhard showed that any Bessel sequence in a finite-dimensional space can be extended to a tight frame. Later on, this result was extended to the infinite-dimensional case by Li and Sun in \cite{deng}. In \cite{CKK} Christensen, Kim and Kim showed that in any separable Hilbert space each pair of Bessel sequences can be extended to a pair of mutually dual frames.

In this paper we study {\em finite} extensions of Bessel sequences and frames in infinite-dimensional Hilbert spaces. Our main result is Theorem \ref{cetiri}  (see also Proposition \ref{jedan} and Theorem \ref{prvi glavni}) which gives a necessary and sufficient condition under which a Bessel sequence can be extended to a frame by adding a finite sequence of vectors. The characterizing condition (which is named essential duality of Bessel sequences) is concerned with pairs of Bessel sequences: the product of the synthesis operator of one sequence by the analysis operator of the other one is the unit element in the Calkin algebra of the underlying Hilbert space.


In the same spirit, Theorem \ref{drugi glavni} provides a necessary and sufficient condition for the existence of a finite extension of a Bessel sequence to a
Parseval frame - a result that in a similar form can be found in \cite{deng}.

Analyzing these conditions we then provide in Theorem \ref{treci glavni} a characterization of frames
for which there exists a finite-dimensional perturbation that is a Parseval frame.

The concluding Section 3 contains some results on excesses of frames. Corollary \ref{sedmi glavni} provides a new characterization of frames with finite excess among all frames.
It turns out that the characterizing condition is a mirror image of the "essential duality" of Bessel sequences introduced in Section 2: the only difference is the order of the action of the analysis and the synthesis operators involved.
This and other results in Section 3 are derived as simple consequences of our results from the preceding section.

\vspace{.1in}

In the rest of this introductory section we summarize some basic facts on Bessel sequences and frames that will be needed in the sequel and fix our notation.

Throughout, $H$ will denote a complex, infinite-dimensional separable Hil\-bert space. By  $\Bbb B(H,K)$ we denote the space of all bounded operators between Hilbert spaces $H$ and $K$. For $H=K$ we write $\Bbb B(H)$.

The identity operator is denoted by $I$. The range and the kernel of an operator $T\in \Bbb B(H,K)$ are denoted by $\mbox{Im}\,T$ and $\mbox{Ker}\,T$, respectively.   If $T\in \Bbb B(H,K)$ has closed range, its pseudo-inverse is denoted by $T^{\dag}$. For basic facts concerning pseudo-inverses we refer to appendix A7 in \cite{Chr}.

For each Bessel sequence $(f_n)_{n=1}^{\infty}$ in $H$  one defines the analysis operator $U:H \rightarrow l^2$ by $Ux=(\la x,f_n\ra)_n$. It is well known that $U$ is a bounded linear operator. Moreover, it turns out that its adjoint $U^*$, that is called the synthesis operator, satisfies $U^*((\alpha_n))=\sum_{n=1}^{\infty} \alpha_nf_n$ and, in particular,
$U^*e_n=f_n,\,\forall n \in \Bbb N$, where $(e_n)$ is the canonical orthonormal basis for $l^2$.

Note that each finite sequence is obviously Bessel. Here we adopt the following convention:
even if $(x_n)_{n=1}^{k}$ is a finite sequence in $H$, we will understand that its analysis operator takes values in $l^2$ (in other words, we will tacitly assume that $x_1,\ldots,x_k$ are followed by infinitely many zeros).

If $(f_n)_{n=1}^{\infty}$ is a frame for $H$, the analysis operator is not only bounded, but also bounded from below. Moreover, $U^*U$ is then an invertible operator (called the frame operator) on $H$ which satisfies $A\cdot I \leq U^*U \leq B \cdot I$ and $\|U^*U\|\leq B$. If $B$ is the optimal upper frame bound we have $\|U^*U\|= B$. Clearly, $(f_n)_{n=1}^{\infty}$ is a Parseval frame if and only if $U$ is an isometry, {\em i.e.} if and only if $U^*U=I$.

For each frame $(f_n)_{n=1}^{\infty}$ its canonical dual is the frame $(\tilde{f_n})_{n=1}^{\infty}$ defined by $\tilde{f_n}=(U^*U)^{-1}f_n,\,n\in \Bbb N$. It is one of the basic facts of the theory of frames that $(\tilde{f_n})_{n=1}^{\infty}$ (as any other dual) enables complete reconstruction in terms of the original frame $(f_n)_{n=1}^{\infty}$:
\begin{equation}\label{dual frame reconstruction}
x=\sum_{n=1}^{\infty}\la x,\tilde{f_n} \ra f_n,\,\forall x \in H.
\end{equation}

Also, it is well known that for each frame $(f_n)_{n=1}^{\infty}$ the sequence $(\overline{f_n})_{n=1}^{\infty}$, $\overline{f_n}=(U^*U)^{-\frac{1}{2}}f_n,\,n\in \Bbb N$, is a Parseval frame.

If $(f_n)_{n=1}^{\infty}$ is a Parseval frame, then $f_n=\tilde{f_n}=\overline{f_n},\,\forall n \in \Bbb N$.

\section{Finite extensions to frames}

Suppose we have a Bessel sequence $(f_n)_{n=1}^{\infty}$ in $H$ and consider the following question: does there exist a sequence $(x_n)_{n}$ in $H$ such that the extended sequence $(x_n)_n \cup (f_n)_{n=1}^{\infty}$ is a frame for $H$?

Note that the answer is trivial if we are allowed to add an infinite sequence: any Bessel system can be extended to a frame by adding in a frame for $H$. Thus, the nontrivial part of our question consists of characterizing those Bessel sequences that admit finite extensions to frames.

\vspace{.1in}

First observe: if $(f_n)_{n=1}^{\infty}$ admits a finite extension to a frame for $H$ then, obviously, the deficit of $(f_n)_{n=1}^{\infty}$ must be finite.

Recall from \cite{CHet_al} that the deficit of a sequence $(f_n)_{n=1}^{\infty}$ in $H$ is defined as the least cardinal $d((f_n)_{n=1}^{\infty})$ such that there exists a subset $G$ of $H$ of cardinality $d((f_n)_{n=1}^{\infty})$ so that $\overline{\mbox{span}}\,((f_n)_{n=1}^{\infty} \cup G)=H$. If $(f_n)_{n=1}^{\infty}$ is a Bessel sequence in $H$ with the analysis operator $U$, one easily proves (see Lemma 4.1. in \cite{CHet_al}) that $d((f_n)_{n=1}^{\infty})=\mbox{dim}(\mbox{Ker}\,U)$.

So, $d((f_n)_{n=1}^{\infty})<\infty$ is a necessary condition if we want to obtain a frame from a Bessel sequence by adding only finitely many vectors. However, this is not enough.


\begin{example}
Consider the canonical orthonormal basis $(e_n)$ for $l^2$ and the sequence $(f_n)_{n=1}^{\infty}$ defined by
$f_1=e_1$, $f_n=e_{n-1}+e_n,\,n\geq 2$.
Clearly, $(f_n)_{n=1}^{\infty}$ is a Bessel sequence in $l^2$ with the analysis operator $U=S+I$, where $S$ is the unilateral shift on $l^2$. Since $S$ has no eigenvalues, we have $\mbox{dim}(\mbox{Ker}\,U)=0$; hence, $d((f_n)_{n=1}^{\infty})=0$.

However, one can not extend $(f_n)_{n=1}^{\infty}$ to a frame by adding finitely many vectors. This can be seen directly (we omit the details), but also, since $\mbox{Im}\,(S+I)$ is not a closed subspace of $l^2$, by applying our Proposition \ref{jedan} and Lemma \ref{tri} below.
\end{example}

\vspace{.1in}

In order to characterize all Bessel sequences which admit finite extensions to frames we first provide another necessary condition.

\begin{proposition}\label{jedan}
Let $(f_n)_{n=1}^{\infty}$ be a Bessel sequence in $H$ for which there exists a finite sequence $(x_n)_{n=1}^{k}$ in $H$ such that the extended sequence $(x_n)_{n=1}^{k} \cup (f_n)_{n=1}^{\infty}$ is a frame for $H$. Then there exists a Bessel sequence $(g_n)_{n=1}^{\infty}$ in $H$ such that the operator $I-V^*U$ has finite rank, where $U$ and $V$ denote the analysis operators of $(f_n)_{n=1}^{\infty}$ and $(g_n)_{n=1}^{\infty}$, respectively.
\end{proposition}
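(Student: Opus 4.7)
The plan is to derive the finite rank condition from the reconstruction formula associated to a dual of the assumed extended frame. Denote the extended frame by $(h_n)_{n=1}^{\infty}$, where $h_n=x_n$ for $1\leq n\leq k$ and $h_{n+k}=f_n$ for $n\geq 1$, and let $(\tilde{h}_n)_{n=1}^{\infty}$ be its canonical dual frame (whose existence and reconstruction property are recalled in the preliminaries). Setting $\tilde{x}_n=\tilde{h}_n$ for $1\leq n\leq k$ and $g_n=\tilde{h}_{n+k}$ for $n\geq 1$, the sequence $(g_n)_{n=1}^{\infty}$ is a subsequence of a frame and therefore Bessel; let $V$ denote its analysis operator.

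I would then apply the dual reconstruction identity in the form $x=\sum_{n=1}^{\infty}\la x,h_n\ra \tilde{h}_n$ (obtained by swapping the roles of the frame and its canonical dual in equation (1.3)), splitting the series at index $k$, to obtain
\[
x=\sum_{n=1}^{k}\la x,x_n\ra \tilde{x}_n+\sum_{n=1}^{\infty}\la x,f_n\ra g_n=\sum_{n=1}^{k}\la x,x_n\ra \tilde{x}_n+V^*Ux,
\]
since $\sum_{n=1}^{\infty}\la x,f_n\ra g_n=V^*(Ux)$ by the description of the synthesis operator recalled in the preliminaries. Rearranging gives
\[
(I-V^*U)x=\sum_{n=1}^{k}\la x,x_n\ra \tilde{x}_n,
\]
and the operator on the right is plainly of rank at most $k$; this yields the desired conclusion.

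Rather than a genuine obstacle, the only point requiring care is the choice of direction in the reconstruction formula. Using $x=\sum_n\la x,h_n\ra\tilde{h}_n$ produces $I-V^*U$ directly, whereas the alternative $x=\sum_n\la x,\tilde{h}_n\ra h_n$ would give the finite rank of $I-U^*V$; since the two operators are adjoints of each other, either route ultimately succeeds, but the former matches the statement without a final adjoint step.
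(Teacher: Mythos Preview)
Your proof is correct and follows essentially the same approach as the paper: take a dual of the extended frame, let $(g_n)$ be its tail after the first $k$ elements, and use the reconstruction identity to express $I-V^*U$ as a sum of $k$ rank-one operators. The paper phrases this via auxiliary operators (writing $Q=V_1-\tilde{V}$ and computing $I-V^*U=Q^*U_1$), whereas you compute directly with the series, but the underlying argument is identical.
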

\begin{proof}
Let us take any dual frame of $(x_n)_{n=1}^{k} \cup (f_n)_{n=1}^{\infty}$ and denote it, for convenience, by $(y_n)_{n=1}^{k} \cup (g_n)_{n=1}^{\infty}$ (in other words, the first $k$ elements $y_1,\ldots , y_k$ are followed by $g_1,g_2,\ldots$). Let $V_1$ be its analysis operator.
Now observe that $(g_n)_{n=1}^{\infty}$ is a Bessel sequence; let us denote its analysis operator by $V$. Finally, let $\tilde{V}$ be the analysis operator of the Bessel sequence $0,\ldots , 0,g_1,g_2,\ldots$ ($k$ zeros). Then the operator $Q=V_1-\tilde{V}$ has finite rank.

Further, let us denote by $U_1$ the analysis operator of the frame $(x_n)_{n=1}^{k} \cup (f_n)_{n=1}^{\infty}$.
Since $(y_n)_{n=1}^{k} \cup (g_n)_{n=1}^{\infty}$ is a dual of $(x_n)_{n=1}^{k} \cup (f_n)_{n=1}^{\infty}$, we have
\begin{equation}\label{nulta dodatna}
V_1^*U_1=I.
\end{equation}
On the other hand, $(\tilde{V})^*U_1x=\sum_{n=1}^{\infty}\la x, f_n\ra g_n=V^*Ux,\,\forall x \in \Bbb H$; thus,
\begin{equation}\label{prva dodatna}
(\tilde{V})^*U_1=V^*U.
\end{equation}
From this we find
$$
I-V^*U\stackrel{(\ref{prva dodatna})}{=}I-(\tilde{V})^*U_1=I-(V_1-Q)^*U_1=I-V_1^*U_1+Q^*U_1\stackrel{(\ref{nulta dodatna})}{=}Q^*U_1.
$$
Hence,
$I-V^*U=Q^*U_1$ is a finite rank operator. 
\end{proof}

\vspace{.1in}

It will turn out that the converse of the preceding proposition is also true. In fact, we shall prove the converse in a stronger form. To do that we first need a lemma on Hilbert space operators known as a part of Atkinson's theorem. For the proof, which is omitted, we refer the reader to the solution to Problem 181 in \cite{H}.

\begin{lemma}\label{tri}
Let $H$ and $K$ be Hilbert spaces and $U\in \Bbb B(H,K)$. Suppose that there exists $V\in \Bbb B(H,K)$ such that the operator $I-V^*U$ is compact. Then $\mbox{Im}\,U$ is a closed subspace of $K$, $\mbox{dim}\,(\mbox{Ker}\,U)<\infty$ and $U^{\dag}U=I-P$, where $P$ is the orthogonal projection to $\mbox{Ker}\,U$.
\end{lemma}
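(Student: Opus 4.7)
The plan is to derive the three conclusions from the single hypothesis that $V^{*}$ is a left inverse of $U$ modulo the compacts; this is the ``left'' half of Atkinson's theorem. I would introduce $C=I-V^{*}U$, which is compact by assumption, so that $V^{*}U=I-C$, and then address the three claims in sequence.

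First I would prove that $\mbox{Ker}\,U$ is finite-dimensional. For any $x\in\mbox{Ker}\,U$ one has $V^{*}Ux=0$, hence $Cx=x$. Thus the restriction of the compact operator $C$ to the closed subspace $\mbox{Ker}\,U$ coincides with the identity on $\mbox{Ker}\,U$; since an identity map can be compact only on a finite-dimensional space, this forces $\mbox{dim}(\mbox{Ker}\,U)<\infty$.

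Next I would show that $\mbox{Im}\,U$ is closed. Let $P$ be the orthogonal projection onto $\mbox{Ker}\,U$ and set $M=(\mbox{Ker}\,U)^{\perp}$; since $\mbox{Im}\,U=U(M)$, it suffices to prove that $U|_{M}$ is bounded below. Suppose, for contradiction, that there is $(x_{n})\subset M$ with $\|x_{n}\|=1$ and $Ux_{n}\to 0$. Then $V^{*}Ux_{n}\to 0$, while compactness of $C$ yields a subsequence with $Cx_{n_{k}}\to y$ for some $y\in H$. The identity $x_{n_{k}}=Cx_{n_{k}}+V^{*}Ux_{n_{k}}$ then forces $x_{n_{k}}\to y$, so $y\in M$, $\|y\|=1$, and $Uy=\lim Ux_{n_{k}}=0$; but $M\cap\mbox{Ker}\,U=\{0\}$, a contradiction. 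Hence $U|_{M}$ is bounded below and $\mbox{Im}\,U=U(M)$ is closed.

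For the last assertion $U^{\dag}U=I-P$, I would invoke the standard description of the Moore--Penrose inverse once closed range is available: $U^{\dag}$ vanishes on $(\mbox{Im}\,U)^{\perp}$ and equals $(U|_{M})^{-1}$ on $\mbox{Im}\,U$; writing $x=Px+(I-P)x$ and noting $UPx=0$ together with $(I-P)x\in M$ yields $U^{\dag}Ux=U^{\dag}U(I-P)x=(I-P)x$. The main obstacle is the closed-range step, where compactness of $C$ must be exploited carefully to extract a convergent subsequence whose limit, by closedness of $M$, returns to $M$ and contradicts injectivity of $U|_{M}$; the finite dimension of the kernel and the pseudo-inverse formula are then essentially mechanical.
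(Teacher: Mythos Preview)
Your proof is correct. The paper does not actually supply its own argument for this lemma; it simply cites the solution to Problem~181 in Halmos's \emph{A Hilbert Space Problem Book} and omits the details. What you have written is precisely the standard proof of the relevant half of Atkinson's theorem: the kernel is finite-dimensional because the compact $C=I-V^{*}U$ restricts to the identity there, the range is closed via the usual bounded-below argument on $(\mbox{Ker}\,U)^{\perp}$ using compactness to extract a convergent subsequence, and $U^{\dag}U=I-P$ is the routine Moore--Penrose identity once closed range is in hand. So your approach is entirely in line with what the paper points to, only made explicit.
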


\begin{theorem}\label{prvi glavni}
Let $(f_n)_{n=1}^{\infty}$ and $(g_n)_{n=1}^{\infty}$ be Bessel sequences in $H$ with the optimal Bessel bounds $B$ and $D$ and the analysis operators $U$ and $V$, respectively. Suppose that $I-V^*U$ is a compact operator. Then there exist finite sequences $(x_n)_{n=1}^{k}$ and $(y_n)_{n=1}^{l}$ such that $(x_n)_{n=1}^{k} \cup (f_n)_{n=1}^{\infty}$ and $(y_n)_{n=1}^{l} \cup (g_n)_{n=1}^{\infty}$ are frames for $H$ with the optimal upper frame bounds $B$ resp.~$D$.
\end{theorem}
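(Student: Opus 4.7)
My plan is to apply Lemma \ref{tri} in both directions to extract structural information about $U$ and $V$, then explicitly construct the finite extensions by adjoining a suitably rescaled orthonormal basis of each kernel, so that the added vectors exactly fill in the deficit while keeping the optimal upper frame bound unchanged.

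Since $(I-V^*U)^{*}=I-U^*V$ is also compact, Lemma \ref{tri} will apply to each of $U$ and $V$, giving that both ranges $\mbox{Im}\,U$ and $\mbox{Im}\,V$ are closed and that $k:=\mbox{dim}(\mbox{Ker}\,U)$, $l:=\mbox{dim}(\mbox{Ker}\,V)$ are finite. I would then fix an orthonormal basis $v_1,\ldots,v_k$ of $\mbox{Ker}\,U$ and set $x_j:=\sqrt{B}\,v_j$ for $j=1,\ldots,k$. Writing $U_1$ for the analysis operator of $(x_j)_{j=1}^{k}\cup(f_n)_{n=1}^{\infty}$, a direct computation should give
\[
U_1^{*}U_1=U^{*}U+BP,
\]
where $P$ denotes the orthogonal projection of $H$ onto $\mbox{Ker}\,U$. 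Both summands preserve the decomposition $H=\mbox{Ker}\,U\oplus(\mbox{Ker}\,U)^{\perp}$: on $\mbox{Ker}\,U$ their sum acts as $B\cdot I$, while on $(\mbox{Ker}\,U)^{\perp}$ it acts as the restriction of $U^{*}U$, which is invertible there (closed range plus trivial kernel) and bounded above by $\|U^{*}U\|=B$. Hence $U_1^{*}U_1$ is invertible with $\|U_1^{*}U_1\|=B$, so $(x_j)_{j=1}^{k}\cup(f_n)_{n=1}^{\infty}$ is a frame with optimal upper bound $B$. The analogous procedure applied to $(g_n)_{n=1}^{\infty}$, $V$, and $D$, using an orthonormal basis of $\mbox{Ker}\,V$ rescaled by $\sqrt{D}$, will produce the desired finite extension of $(g_n)_{n=1}^{\infty}$.

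The delicate point I expect to require the most care is preserving the optimal upper frame bound exactly: naively adjoining unit vectors from $\mbox{Ker}\,U$ would in general produce upper bound $B+1$. Rescaling by $\sqrt{B}$ is precisely what forces $BP$ and $U^{*}U$ to have the same operator norm $B$ while acting on mutually orthogonal reducing subspaces, so that $\|BP+U^{*}U\|=\max\{B,B\}=B$ rather than $B+1$ or some intermediate value.
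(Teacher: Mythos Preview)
Your proposal is correct and follows the same overall strategy as the paper: apply Lemma \ref{tri} (and its adjoint version) to get $\dim(\mbox{Ker}\,U)<\infty$ and closed range, then adjoin a finite frame for $\mbox{Ker}\,U$ with upper bound $B$. The paper's Remark \ref{nejedinstvenost}-type comment (Remark 2.5) in fact singles out your exact choice $x_j=\sqrt{B}\,v_j$ as the minimal extension.

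Where you diverge from the paper is in the verification that the extension is a frame with optimal upper bound $B$. The paper builds an explicit left inverse: writing $U_1=F+S^kU$ and taking a dual frame $(z_j)$ for $(x_j)$ with analysis operator $G$, it sets $W=G+S^k(U^{\dag})^*$ and checks $W^*U_1=G^*F+U^{\dag}U=P+(I-P)=I$, so $U_1$ is bounded below; the upper bound $B$ is then checked separately via the orthogonal decomposition. Your route is more direct: compute $U_1^*U_1=U^*U+BP$, observe that both summands reduce along $\mbox{Ker}\,U\oplus(\mbox{Ker}\,U)^{\perp}$, and read off invertibility and $\|U_1^*U_1\|=B$ at once. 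Your argument is shorter and avoids the pseudo-inverse and dual-frame machinery; the paper's has the advantage of working verbatim for \emph{any} finite frame $(x_j)$ for $\mbox{Ker}\,U$ with upper bound $B$, not just a rescaled orthonormal basis.

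One small inaccuracy in your closing paragraph: adjoining an orthonormal basis of $\mbox{Ker}\,U$ (without rescaling) gives frame operator $U^*U+P$, whose norm is $\max\{1,B\}$, not $B+1$; the two summands live on orthogonal reducing subspaces, so they do not add. The rescaling by $\sqrt{B}$ is still needed precisely when $B<1$, to bring the $\mbox{Ker}\,U$ block down from $1$ to $B$, so your conclusion stands even though the heuristic is off.
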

\begin{proof}
By the preceding lemma we have $\mbox{dim}(\mbox{Ker}\,U)<\infty$, so one can find a finite frame $x_1,\ldots , x_k$ for $\mbox{Ker}\,U$
with the optimal upper frame bound $B$. Let us denote by $F$ the corresponding analysis operator.
Take any dual frame $z_1,\ldots ,z_k$ for $x_1,\ldots , x_k$ with the analysis operator $G$. We assume that all $x_j$'s and $z_j$'s belong to $\mbox{Ker}\,U$. Then we have $G^*F=P$, where $P$ denotes the orthogonal projection to $\mbox{Ker}\,U$. Note also that both $\mbox{Im}\,F$ and $\mbox{Im}\,G$ are contained in $M_k=\mbox{span}\,\{e_1,\ldots , e_k\}$, where $(e_n)$ denotes the canonical orthonormal basis for $l^2$.

Let us now take the extended sequence $x_1,\ldots , x_k,f_1,f_2,\ldots$. Observe that its analysis operator $U_1$ is given by $U_1=F+S^kU$, where $S$ denotes the unilateral shift on $l^2$. Let $W=G+S^k(U^{\dag})^*$. Then, using the equalities $F^*S^k=G^*S^k=0$ and the last assertion of the preceding lemma, we obtain $$W^*U_1=(G^*+U^{\dag}(S^*)^k)(F+S^kU)=G^*F+U^{\dag}U=P+(I-P)=I.$$ This implies that $U_1$ is bounded from below; thus, $(x_n)_{n=1}^{k} \cup (f_n)_{n=1}^{\infty}$ is a frame.

For $x\in \mbox{Ker}\,U$ we have $0=\|Ux\|^2=\sum_{n=1}^{\infty}|\la x,f_n\ra|^2$ and $\sum_{n=1}^{k}|\la x,x_n\ra|^2\leq B\|x\|^2$. On the other hand, if $x \in (\mbox{Ker}\,U)^{\perp}$ then $\sum_{n=1}^{k}|\la x,x_n\ra|^2=0$.

Let us now take an arbitrary $x \in H$ and write $x=a+b$ with $a\in \mbox{Ker}\,U$ and $b\in (\mbox{Ker}\,U)^{\perp}$. Then
$$\sum_{n=1}^{k}|\la x,x_n\ra|^2+\sum_{n=1}^{\infty}|\la x,f_n\ra|^2=\sum_{n=1}^{k}|\la a,x_n\ra|^2+\|U(a+b)\|^2$$
$$\leq B\|a\|^2+\|Ub\|^2\leq B(\|a\|^2+\|b\|^2)=B\|x\|^2.$$

The assertions concerning $(g_n)_{n=1}^{\infty}$ now follow by the same arguments using compactness of the operator $I-U^*V$.
\end{proof}

\begin{remark}
As the preceding proof shows, the extension of a Bessel sequence to a frame is not unique, even if we insist (as we did) on the same upper frame bound. It is also clear that the minimal number of elements that should be added to a given Bessel sequence $(f_n)_{n=1}^{\infty}$ in order to obtain a frame is $d= \mbox{dim}\,(\mbox{Ker}\,U)$. In that sense a minimal choice is $(\sqrt{B}w_1,\ldots , \sqrt{B}w_d)$, where $(w_1,\ldots ,w_d)$ is an orthonormal basis for $\mbox{Ker}\,U$.
\end{remark}

\vspace{.1in}

Proposition \ref{jedan} and Theorem \ref{prvi glavni} motivate the following definition:

\begin{definition}\label{ess dual}
We say that Bessel sequences $(f_n)_{n=1}^{\infty}$ and $(g_n)_{n=1}^{\infty}$ with the analysis operators $U$ and $V$ are essentially dual to each other if $I-V^*U$ is a compact operator.
\end{definition}

If $I-V^*U$ is compact then, obviously, $I-U^*V$ is compact as well; hence essential duality of Bessel sequences is a symmetric relation.

Now we can summarize the statements of Proposition \ref{jedan} and Theorem \ref{prvi glavni} in the following simple way:

\begin{theorem}\label{cetiri}
A Bessel sequence $(f_n)_{n=1}^{\infty}$ has  a finite extension to a frame if and only if there exists a Bessel sequence essentially dual to $(f_n)_{n=1}^{\infty}$.
\end{theorem}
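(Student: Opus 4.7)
The plan is to deduce this theorem as a direct corollary of Proposition \ref{jedan} and Theorem \ref{prvi glavni}, which together already cover both implications once we reinterpret them through Definition \ref{ess dual}.

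For the forward implication, I would start by assuming that $(f_n)_{n=1}^\infty$ admits a finite extension to a frame. Proposition \ref{jedan} then yields a Bessel sequence $(g_n)_{n=1}^\infty$, with analysis operator $V$, such that $I - V^*U$ has finite rank. Since every finite rank operator on a Hilbert space is compact, this finite rank operator is in particular compact, so by Definition \ref{ess dual} the sequence $(g_n)_{n=1}^\infty$ is essentially dual to $(f_n)_{n=1}^\infty$. This half is a one-line consequence.

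For the backward implication, I would assume there exists a Bessel sequence $(g_n)_{n=1}^\infty$ that is essentially dual to $(f_n)_{n=1}^\infty$, meaning the operator $I - V^*U$ is compact, where $V$ is the analysis operator of $(g_n)_{n=1}^\infty$. Theorem \ref{prvi glavni} then applies verbatim and produces a finite sequence $(x_n)_{n=1}^k$ so that $(x_n)_{n=1}^k \cup (f_n)_{n=1}^\infty$ is a frame for $H$ (with the further refinement that the optimal upper frame bound is preserved, although that is not needed here).

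There is no main obstacle; the work has already been done in the two preceding results, and the only thing to verify is that the asymmetry between "finite rank" (appearing in Proposition \ref{jedan}) and "compact" (appearing in Definition \ref{ess dual} and Theorem \ref{prvi glavni}) is harmless, which follows since finite rank implies compact and Theorem \ref{prvi glavni} only requires compactness. Thus the two conditions glue together seamlessly to give the stated equivalence.
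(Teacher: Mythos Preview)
Your proposal is correct and matches the paper's approach exactly: the paper presents Theorem \ref{cetiri} as an immediate summary of Proposition \ref{jedan} and Theorem \ref{prvi glavni} without giving a separate proof. Your observation that the finite-rank conclusion of Proposition \ref{jedan} feeds into the compactness hypothesis of Theorem \ref{prvi glavni} via Definition \ref{ess dual} is precisely the intended glue.
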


\vspace{.1in}

Next we discuss finite extensions of Bessel sequences to Parseval frames. Again, we shall first obtain necessary conditions.

Suppose we have a Bessel sequence $(f_n)_{n=1}^{\infty}$ in $H$ for which there exists a finite sequence $(x_n)_{n=1}^{k}$ such that
$(x_n)_{n=1}^{k} \cup (f_n)_{n=1}^{\infty}$ is a Parseval frame for $H$. Denote by $B$ the optimal Bessel bound of $(f_n)_{n=1}^{\infty}$.
Since
$$\sum_{n=1}^{\infty}|\la x,f_n\ra|^2\leq \sum_{n=1}^{k}|\la x,x_n\ra|^2+\sum_{n=1}^{\infty}|\la x,f_n\ra|^2=\|x\|^2,\,\forall x \in H,$$
we conclude that $B\leq 1$.

Let $U$ be the analysis operator of $(f_n)_{n=1}^{\infty}$. Denote by $F$ the analysis operator of $(x_n)_{n=1}^{k}$; since this sequence is finite, $F$ is a finite rank operator. Now observe that the analysis operator $U_1$ of the sequence $(x_n)_{n=1}^{k} \cup (f_n)_{n=1}^{\infty}$ is given by $U_1=F+S^kU$, where, as before, $S$ denotes the unilateral shift on $l^2$.

Since by our assumption $(x_n)_{n=1}^{k} \cup (f_n)_{n=1}^{\infty}$ is a Parseval frame for $H$, we have $U_1^* U_1 = I$. From this we get
$$I=(F+S^kU)^*(F+S^kU)=F^*F+F^*S^kU+U^*(S^k)^*F+U^*U.$$
Let $K=F^*F+F^*S^kU+U^*(S^k)^*F$. Then $K$ is a finite rank operator and $I-U^*U=K$.
In particular, the operator $I-U^*U$ is not invertible because it has finite-dimensional range. This in turn implies that $1$ belongs to the spectrum of $U^*U$ and this, together with our previous observation $B\leq 1$, implies $B=1$.

The statement of the following theorem appears in a similar form in \cite{deng}. Although the proof in \cite{deng} uses g-frames, the key argument is essentially the same as in our proof below. 

\begin{theorem}\label{drugi glavni}
Let $(f_n)_{n=1}^{\infty}$ be a Bessel sequence in $H$ with the optimal Bessel bound $B$ and the analysis operator $U$. The following conditions are mutually equivalent:
\begin{itemize}
\item[(a)] There exists a finite sequence $(x_n)_{n=1}^{k}$ in $H$ such that $(x_n)_{n=1}^{k} \cup (f_n)_{n=1}^{\infty}$ is a Parseval frame for $H$.
\item[(b)] $B= 1$ and $\mbox{dim}\,(\mbox{Im}(I-U^*U))<\infty$.
\end{itemize}
\end{theorem}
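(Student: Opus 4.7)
For the direction (a)$\Rightarrow$(b), nothing new is needed: the discussion immediately preceding the theorem already derives $B\leq 1$, shows that $I-U^*U$ equals a finite rank operator $K$ (and hence has finite-dimensional range), and deduces $B=1$ from the fact that $1$ must belong to the spectrum of $U^*U$. So my plan focuses on the reverse implication.

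Assume (b). Since $\|U^*U\|=B=1$ and $U^*U\geq 0$, the operator $I-U^*U$ is positive; by hypothesis it also has finite-dimensional range, hence it is a positive finite-rank operator. The plan is to use its spectral decomposition to pick the $x_n$'s. Write
\begin{equation*}
I-U^*U=\sum_{i=1}^{k}\lambda_i\la \cdot,v_i\ra v_i,
\end{equation*}
where $\lambda_1,\ldots,\lambda_k>0$ and $v_1,\ldots,v_k$ is an orthonormal system in $H$, and define $x_i=\sqrt{\lambda_i}\,v_i$ for $i=1,\ldots,k$. If $F$ denotes the analysis operator of the finite sequence $(x_n)_{n=1}^{k}$, then a direct computation using the spectral decomposition yields $F^*F=\sum_{i=1}^{k}\lambda_i\la\cdot,v_i\ra v_i=I-U^*U$.

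Next I would verify that the analysis operator $U_1$ of the concatenated sequence $(x_n)_{n=1}^{k}\cup (f_n)_{n=1}^{\infty}$ gives $U_1^*U_1=I$. As in the proof of Theorem \ref{prvi glavni}, $U_1=F+S^kU$, so
\begin{equation*}
U_1^*U_1=F^*F+F^*S^kU+U^*(S^k)^*F+U^*U.
\end{equation*}
The crucial observation (and the only mild subtlety) is that the two cross terms vanish: since $F$ is built from only $k$ vectors, $\mbox{Im}\,F\subseteq\mbox{span}\{e_1,\ldots,e_k\}$, while $\mbox{Im}\,S^kU\subseteq\mbox{span}\{e_{k+1},e_{k+2},\ldots\}$, giving $F^*S^k=0$ and $(S^k)^*F=0$. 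Combined with $F^*F=I-U^*U$, this yields $U_1^*U_1=I$, so $(x_n)_{n=1}^{k}\cup (f_n)_{n=1}^{\infty}$ is a Parseval frame.

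I do not expect any real obstacle here; the argument is essentially constructive once one notices that, because of the shift in the construction of $U_1$, the Parseval identity reduces to matching $F^*F$ with the positive finite-rank operator $I-U^*U$, which is exactly what the spectral decomposition lets us do. The only care needed is the vanishing of the cross terms, which is immediate from the block structure of $l^2=\mbox{span}\{e_1,\ldots,e_k\}\oplus S^k(l^2)$.
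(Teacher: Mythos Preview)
Your proof is correct and follows essentially the same route as the paper's: both construct an operator $F$ with $F^*F=I-U^*U$ and $\mbox{Im}\,F\subseteq\mbox{span}\{e_1,\ldots,e_k\}$, then verify that $U_1=F+S^kU$ is an isometry using the orthogonal splitting $l^2=M_k\oplus S^k(l^2)$. The paper phrases the construction via a partial isometry $R$ and the square root $(I-U^*U)^{1/2}$ rather than the explicit spectral decomposition, but your choice $x_i=\sqrt{\lambda_i}\,v_i=(I-U^*U)^{1/2}v_i$ is exactly the special case recorded afterwards in Corollary~\ref{minimal}.
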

\begin{proof}
(a) $\Rightarrow$ (b) is already proved in the preceding discussion.
Let us prove \mbox{(b) $\Rightarrow$ (a)}.

Since $B= 1$, the square root $(I-U^*U)^{\frac{1}{2}}$ is a well defined and positive operator.
Observe that $\mbox{Ker}\,(I-U^*U)^{\frac{1}{2}}= \mbox{Ker}\,(I-U^*U)$. By taking orthogonal complements we get $\mbox{Im}\,(I-U^*U)^{\frac{1}{2}}= \mbox{Im}\,(I-U^*U)$ (since, by assumption, this subspace is finite-dimensional, the closure signs are superfluous).

Let $k=\mbox{dim}\,(\mbox{Im}\,(I-U^*U))$ and $M_k=\mbox{span}\,\{e_1,\ldots,e_k\}\leq l^2$. Take any partial isometry $R\in \Bbb B(H,l^2)$  with the initial subspace $\mbox{Im}\,(I-U^*U)$ and the final subspace $M_k$.
Notice that $\mbox{Im}\,R \perp \mbox{Im}\,S^kU$.

Let $U_1=R(I-U^*U)^{\frac{1}{2}}+S^kU$. We claim that $U_1$ is an isometry. Indeed, for $x \in H$ we have
$$\|U_1x\|^2=\|R(I-U^*U)^{\frac{1}{2}}x+S^kUx\|^2=\|R(I-U^*U)^{\frac{1}{2}}x\|^2+\|S^kUx\|^2$$
$$=
\|(I-U^*U)^{\frac{1}{2}}x\|^2+\|Ux\|^2=\la (I-U^*U)x,x\ra + \la U^*Ux,x\ra=\|x\|^2.$$

Since $U_1$ is an isometry, $(U_1^*e_n)_{n=1}^{\infty}$ is a Parseval frame for $H$. Observe that $U_1^*=(I-U^*U)^{\frac{1}{2}}R^*+U^*(S^*)^k$
implies $U_1^*e_{k+j}=U^*e_j=f_j,\,\forall j\in \Bbb N$. Thus, our original Bessel sequence $(f_n)_{n=1}^{\infty}$ is extended to a Parseval frame by the elements $x_j=(I-U^*U)^{\frac{1}{2}}R^*e_j,\,j=1,2,\ldots,k$.
\end{proof}

\begin{remark}\label{nejedinstvenost}
Suppose that $l\geq k=\mbox{dim}\,(\mbox{Im}\,(I-U^*U))$ and take a partial isometry $R^{\prime}$ with the initial subspace $\mbox{Im}\,(I-U^*U)$ and the final subspace {\em contained} in $M_l=\mbox{span}\,\{e_1,\ldots,e_l\}\leq l^2$. Then the same argument as above applies if we replace $U_1$ by $U_1^{\prime}=R^{\prime}(I-U^*U)^{\frac{1}{2}}+S^lU$.
This would result with an extension of the original Bessel sequence to a Parseval frame by adding $l$ elements.

If we assume that $(y_n)_{n = 1}^{k^{\prime}} \cup (f_n)_{n = 1}^{\infty}$ is a Parseval frame for $k^{\prime} < k$, then for each $x \in H$ we have that $x = \sum_{n = 1}^{\infty} \la x, f_n \ra f_n + \sum_{n = 1}^{k^{\prime}} \la x, y_n \ra y_n$. Then
$$
\mbox{Im} \left( I - U^* U \right) = \left\{ x - \sum_{n = 1}^{\infty} \la x, f_n \ra f_n : x \in H \right\} = \left\{ \sum_{n = 1}^{k^{\prime}} \la x, y_n \ra y_n : x \in H \right\}.
$$
Therefore $k = \dim \mbox{Im} \left( I - U^* U \right) \le \dim \mbox{span} \{ y_n \}_{n = 1}^{k^{\prime}} \le k^{\prime} < k$ which is a contradiction. Thus, the minimal number of elements that one must add to a given Bessel sequence in order to obtain a Parseval frame is $k=\mbox{dim}\,(\mbox{Im}\,(I-U^*U))$. Such minimal extensions are described in Corollary \ref{minimal}. Here we also mention the concluding Corollary \ref{zadnji} which provides another related result.
\end{remark}

\begin{remark}
We also note that for the proof of (b) $\Rightarrow$ (a) in the preceding theorem it suffices to assume
$B \leq 1$ and $\mbox{dim}\,(\mbox{Im}(I-U^*U))<\infty$.
\end{remark}

\begin{corollary}\label{minimal}
Let $(f_n)_{n=1}^{\infty}$ be a Bessel sequence in $H$ with a Bessel bound less than or equal to $1$ and such that
$\mbox{dim}\,(\mbox{Im}(I-U^*U))=k<\infty$. Let $x_j=(I-U^*U)^{\frac{1}{2}}w_j,\,j=1,\ldots , k$, where $(w_1,\ldots , w_k)$ is an orthonormal basis for
$\mbox{Im}\,(I-U^*U)$. Then $(x_n)_{n=1}^{k} \cup (f_n)_{n=1}^{\infty}$ is a Parseval frame for $H$.
\end{corollary}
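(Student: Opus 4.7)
The plan is to recognize this corollary as the specialization of Theorem~\ref{drugi glavni} (together with the subsequent remark noting that $B \leq 1$ suffices) obtained by a particular choice of the partial isometry $R$ appearing in that proof. So essentially no new work is needed; I just have to choose $R$ so that the vectors it produces match the ones stated here.

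Concretely, I would begin by defining $R \in \Bbb B(H, l^2)$ as the partial isometry determined by $Rw_j = e_j$ for $j = 1, \ldots, k$ and $R|_{(\mbox{\scriptsize Im}\,(I-U^*U))^{\perp}} = 0$. Since $(w_1, \ldots, w_k)$ is an orthonormal basis for $\mbox{Im}\,(I-U^*U)$, this is a well-defined partial isometry with initial subspace $\mbox{Im}\,(I-U^*U)$ and final subspace $M_k = \mbox{span}\,\{e_1,\ldots,e_k\}$, exactly as required in the proof of Theorem~\ref{drugi glavni}. Taking adjoints gives $R^* e_j = w_j$ for $j = 1, \ldots, k$.

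With this choice of $R$, the extension vectors produced by Theorem~\ref{drugi glavni} are
$$
x_j = (I - U^*U)^{\frac{1}{2}} R^* e_j = (I - U^*U)^{\frac{1}{2}} w_j, \qquad j = 1, \ldots, k,
$$
which are precisely the vectors in the statement of the corollary. The hypothesis that some Bessel bound of $(f_n)$ does not exceed $1$ ensures that $I - U^*U \geq 0$, so that the square root is well defined; and the assumption $\dim\,\mbox{Im}(I - U^*U) = k < \infty$ is what permits the construction of $R$ in the first place. Invoking Theorem~\ref{drugi glavni} (in the strengthened form noted in the remark immediately preceding this corollary) then yields that $(x_n)_{n=1}^{k} \cup (f_n)_{n=1}^{\infty}$ is a Parseval frame for $H$.

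There is no real obstacle here: the only point requiring any care is verifying that the prescription $w_j \mapsto e_j$ indeed extends to a partial isometry with the prescribed initial and final subspaces, which is immediate from the fact that $(w_j)$ and $(e_j)_{j \le k}$ are orthonormal bases of the respective $k$-dimensional spaces. If preferred, one could equally well argue directly by writing the analysis operator of the extended sequence as $U_1 = F + S^k U$ (with $F$ the analysis operator of $(x_j)_{j=1}^{k}$), checking that $F^*S^kU = 0$ since $\mbox{Im}\,F \subseteq M_k \perp \mbox{Im}\,S^k U$, and using orthonormality of the $w_j$'s to compute $F^*F = I - U^*U$, so that $U_1^*U_1 = F^*F + U^*U = I$; but this merely duplicates the content of Theorem~\ref{drugi glavni}.
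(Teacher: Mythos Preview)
Your proposal is correct and matches the paper's intended approach: the corollary is stated in the paper without proof, as an immediate consequence of Theorem~\ref{drugi glavni} (in the form of the remark allowing $B\leq 1$) obtained by choosing the partial isometry $R$ so that $R^*e_j=w_j$. Your alternative direct computation via $U_1^*U_1=F^*F+U^*U=I$ is also fine and simply unwinds the same argument.
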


\vspace{.1in}

When one compares the statement of Theorem \ref{drugi glavni} with those of Theorem \ref{prvi glavni} and Corollary \ref{cetiri} it is natural to ask the following question: is it enough, in order to ensure a finite extension of a given Bessel sequence to a Parseval frame, to assume that $I-U^*U$ is only a compact operator (together with $B\leq 1$)?

The answer is negative. Namely, if $I-U^*U$ is a compact operator, then there does exist a bounded operator $V$ such that $I-V^*U$ has finite rank, but one can not conclude that the rank of $I-U^*U$ is finite.

Here is an example. Consider the canonical orthonormal basis $(e_n)$ for $l^2$ and the sequence $(f_n)_{n=1}^{\infty}$ defined by $f_n=\sqrt{\frac{n}{n+1}}e_n,\,n \in \Bbb N$. Clearly, $(f_n)_{n=1}^{\infty}$ is a frame; in fact, a Riesz basis with the upper frame bound $B=1$. If we denote by $U$ its analysis operator, then $U^*Ux=\sum_{n=1}^{\infty}\frac{n}{n+1}\la x,e_n\ra e_n,\,\forall x \in l^2$. This implies $(I-U^*U)x=
\sum_{n=1}^{\infty}\frac{1}{n+1}\la x,e_n\ra e_n,\,\forall x \in l^2$; thus, $I-U^*U$ is a compact operator. However; the sequence $(f_n)_{n=1}^{\infty}$ can not be extended to a Parseval frame by adding a finite number of elements. This can be seen directly, but the same conclusion also follows from Theorem \ref{drugi glavni}: namely, it is evident that the operator $I-U^*U$ has infinite rank.

\vspace{.1in}

We end this discussion with a corollary on extensions to tight frames of Bessel sequences with an arbitrary Bessel bound.

\begin{corollary} \label{pet}
Let $(f_n)_{n=1}^{\infty}$ be a Bessel sequence in $H$ with the optimal Bessel bound $B$ and the analysis operator $U$. The following conditions are mutually equivalent:
\begin{itemize}
\item[(a)] There exists a finite sequence $(x_n)_{n=1}^{k}$ in $H$ such that $(x_n)_{n=1}^{k} \cup (f_n)_{n=1}^{\infty}$ is a $B$-tight frame for $H$.
\item[(b)] $\mbox{dim}\,(\mbox{Im}(B\cdot I-U^*U))<\infty$.
\end{itemize}
\end{corollary}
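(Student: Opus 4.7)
The plan is to reduce this corollary to Theorem \ref{drugi glavni} by a simple rescaling argument. If $B = 0$ the Bessel sequence is identically zero and both conditions hold trivially, so assume $B > 0$.

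First, introduce the rescaled sequence $g_n = \frac{1}{\sqrt{B}} f_n$. Its analysis operator is $V = \frac{1}{\sqrt{B}} U$, and since $\|V^*V\| = \frac{1}{B}\|U^*U\| = 1$ (using that $B$ is the optimal Bessel bound of $(f_n)$, so $\|U^*U\| = B$), the optimal Bessel bound of $(g_n)$ equals $1$. Moreover,
\[
I - V^*V = \frac{1}{B}\bigl(B\cdot I - U^*U\bigr),
\]
so $\dim\mbox{Im}(I - V^*V) = \dim\mbox{Im}(B\cdot I - U^*U)$.

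Next, observe that a finite sequence $(x_n)_{n=1}^k$ makes $(x_n)_{n=1}^k \cup (f_n)_{n=1}^\infty$ a $B$-tight frame if and only if the sequence $(\tfrac{1}{\sqrt{B}}x_n)_{n=1}^k \cup (g_n)_{n=1}^\infty$ is a Parseval frame, because the analysis operator of the latter is $\frac{1}{\sqrt{B}}$ times that of the former, so the identity $U_1^*U_1 = B\cdot I$ is equivalent to the corresponding identity equal to $I$ for the rescaled synthesis.

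Applying Theorem \ref{drugi glavni} to $(g_n)_{n=1}^\infty$ (whose optimal Bessel bound is automatically $1$), the existence of such a finite Parseval extension is equivalent to $\dim\mbox{Im}(I - V^*V) < \infty$, which by the identification above is exactly condition (b). This yields both implications simultaneously. There is no real obstacle here — the only thing to keep track of is that the hypothesis "$B$ is the optimal Bessel bound" is needed precisely to ensure that the rescaled sequence lands in the regime covered by Theorem \ref{drugi glavni} (optimal Bessel bound $\le 1$, which suffices as noted in the remark following that theorem).
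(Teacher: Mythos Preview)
Your argument is essentially identical to the paper's own proof: rescale by $\frac{1}{\sqrt{B}}$ to obtain a Bessel sequence with optimal bound $1$ and analysis operator $\frac{1}{\sqrt{B}}U$, then invoke Theorem~\ref{drugi glavni}. The only slip is the throwaway line about $B=0$: in that case condition (b) holds trivially (since $B\cdot I - U^*U = 0$) but condition (a) fails, because a $0$-tight frame for an infinite-dimensional $H$ does not exist; the paper itself tacitly assumes $B>0$ by dividing by $\sqrt{B}$, so this degenerate case is outside the intended scope anyway.
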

\begin{proof}
Consider the Bessel sequence $(g_n)_{n=1}^{\infty}$, where $g_n=\frac{1}{\sqrt{B}}f_n,\, n \in \Bbb N$, and observe that its optimal Bessel bound is equal to $1$, and its analysis operator is $\frac{1}{\sqrt{B}}U$. Now Theorem \ref{drugi glavni} applies. 
\end{proof}

\vspace{.1in}

Theorem \ref{drugi glavni} shows that Bessel sequences and, in particular, frames for which $I-U^*U$ is a finite rank operator are, in a sense, almost Parseval. Our next theorem provides two more characterizing properties of such frames which also show, in a different way, a close relation with the class of Parseval frames.

\begin{theorem}\label{treci glavni}
Let $(f_n)_{n=1}^{\infty}$ be a frame for $H$ with the analysis operator $U$. The following conditions are mutually equivalent:
\begin{itemize}
\item[(a)] $\mbox{dim}\,(\mbox{Im}(I-U^*U))<\infty$.
\item[(b)] There exists a sequence $(g_n)_{n=1}^{\infty}$ in a finite-dimensional subspace $L$ of $H$ such that $(f_n+g_n)_{n=1}^{\infty}$ is a Parseval frame for $H$.
\item[(c)] $x=\sum_{n=1}^{\infty}\la x,f_n\ra f_n,\,\forall x \in M$, where $M$ is a closed subspace of $H$ of finite co-dimension.
\end{itemize}
\end{theorem}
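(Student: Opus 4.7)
The plan is to verify (a) $\Leftrightarrow$ (c) and (b) $\Rightarrow$ (a) by short direct arguments and to concentrate the real work on (a) $\Rightarrow$ (b), which requires an explicit finite-rank perturbation. Throughout I set $J := I - U^*U$; being self-adjoint, $J$ satisfies $\overline{\mbox{Im}\,J} = (\mbox{Ker}\,J)^\perp$, and since $J$ commutes with $U^*U$ both $\mbox{Im}\,J$ and $\mbox{Ker}\,J$ are $U^*U$-invariant.

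For (a) $\Leftrightarrow$ (c) I take $M := \mbox{Ker}\,J$: finite-dimensionality of $\mbox{Im}\,J$ is equivalent to finite codimension of $M$ (since $M^\perp = \overline{\mbox{Im}\,J} = \mbox{Im}\,J$), and the identity $U^*Ux = x$ on $M$ is precisely the reconstruction formula in (c). For (b) $\Rightarrow$ (a) I observe that if $V$ is the analysis operator of $(g_n)$, then $g_n \in L$ forces $\mbox{Im}\,V^* \subseteq L$ and hence $V$ to have finite rank; expanding $(U+V)^*(U+V) = I$ then gives $I - U^*U = U^*V + V^*U + V^*V$, which is finite rank.

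The substantial direction is (a) $\Rightarrow$ (b). Set $K := \mbox{Im}\,J$, which is finite-dimensional by (a). Because $U^*U|_{K^\perp} = I|_{K^\perp}$, the restriction $U|_{K^\perp}$ is an isometric embedding of $K^\perp$ into $l^2$; a one-line computation $\la Ux, Uy\ra = \la x, U^*Uy\ra = 0$ for $x \in K^\perp$, $y \in K$ then shows $U(K^\perp) \perp U(K)$ inside $l^2$. Since $U$ is injective, $U|_K : K \to U(K)$ is a linear bijection between two finite-dimensional Hilbert spaces of equal dimension, so one may pick an arbitrary unitary $W : K \to U(K)$. Defining $V \in \Bbb B(H,l^2)$ by $V|_{K^\perp} = 0$ and $V|_K = W - U|_K$ yields $\mbox{Im}\,V^* \subseteq K$, so the sequence $g_n := V^*e_n$ lies in the finite-dimensional subspace $L := K$; the restrictions $(U+V)|_{K^\perp} = U|_{K^\perp}$ and $(U+V)|_K = W$ are isometries with ranges in the orthogonal subspaces $U(K^\perp)$ and $U(K)$ of $l^2$, so $U+V$ is itself an isometry, i.e.\ $(f_n + g_n) = ((U+V)^*e_n)$ is a Parseval frame.

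The one delicate ingredient is the orthogonality $U(K) \perp U(K^\perp)$ together with the $U^*U$-invariance of $K$ and $K^\perp$; both rest on the self-adjointness of $J$. Once these structural facts are in hand, the construction reduces to the elementary finite-dimensional task of choosing a unitary between two spaces of the same dimension, and the verification that $U+V$ is an isometry splits cleanly along $H = K \oplus K^\perp$.
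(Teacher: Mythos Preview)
Your proof is correct and, for (b) $\Rightarrow$ (a) and (a) $\Leftrightarrow$ (c), matches the paper's argument essentially verbatim. One small wrinkle: your compressed treatment of (a) $\Leftrightarrow$ (c) fixes $M := \mbox{Ker}\,J$ from the outset, which handles (a) $\Rightarrow$ (c) but not literally (c) $\Rightarrow$ (a), where $M$ is \emph{given}. The missing line is just $M \subseteq \mbox{Ker}\,J$, whence $\overline{\mbox{Im}\,J} = (\mbox{Ker}\,J)^\perp \subseteq M^\perp$ is finite-dimensional; the paper spells this out.

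For (a) $\Rightarrow$ (b) your construction is the same in outline as the paper's primary proof but with a genuine simplification. Both arguments split $H = K^\perp \oplus K$ with $K = \mbox{Im}\,J$, keep $U$ unchanged on $K^\perp$ (where it is already isometric), and replace $U|_K$ by an isometry whose range is orthogonal to $U(K^\perp)$. The paper, however, only establishes that $\mbox{Im}\,U = U(K^\perp) \dotplus U(K)$ is a \emph{direct} sum (via injectivity of $U$) and then appeals to the general fact that all direct complements of a closed subspace have the same dimension in order to produce an isometry $F_0 : K \to \mbox{Im}\,U \ominus U(K^\perp)$. Your observation that in fact $U(K) \perp U(K^\perp)$---which follows immediately from $U^*U$-invariance of $K$, or even more directly from $\la Ux, Uy\ra = \la U^*Ux, y\ra = \la x, y\ra = 0$ for $x \in K^\perp$---makes this detour unnecessary: the target space is simply $U(K)$ itself, and any unitary $W : K \to U(K)$ does the job. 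The paper also records, in a remark, an alternative proof of (a) $\Rightarrow$ (b) via the canonical Parseval frame $((U^*U)^{-1/2}f_n)$ and an explicit diagonalisation of $U^*U$ on $K$; your argument is closer in spirit to the main proof than to that alternative, but streamlines it.
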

\begin{proof}
(a) $\Rightarrow$ (b).

Since $I-U^*U$ is a selfadjoint operator and $\mbox{dim}\,(\mbox{Im}(I-U^*U))<\infty$, we have $H=\mbox{Ker}(I-U^*U)\oplus \mbox{Im}(I-U^*U)$. This implies $\mbox{Im}\,U=U(\mbox{Ker}(I-U^*U))+U(\mbox{Im}(I-U^*U))$. Moreover, we claim that this sum is direct. Indeed, if $Ux=Uy$ for some
$x \in \mbox{Ker}(I-U^*U)$ and $y \in \mbox{Im}(I-U^*U)$ then, by injectivity of $U$, we conclude $x=y$ and hence $x=y=0$.

Now observe that all direct complements of a closed subspace in a given space are of the same dimension. Thus,
$$
\mbox{dim}\,(\mbox{Im}\,U \ominus U(\mbox{Ker}(I-U^*U)))=\mbox{dim}\,(U(\mbox{Im}(I-U^*U)))=\mbox{dim}\,(\mbox{Im}(I-U^*U))
$$
where the last inequality follows from injectivity of $U$.
This allows us to find an isometry $F_0:\mbox{Im}(I-U^*U) \rightarrow \mbox{Im}\,U$ such that $\mbox{Im}\,F_0 \perp U(\mbox{Ker}(I-U^*U))$.
Put $U_0=U|_{\mbox{Ker}(I-U^*U)}$. Since we have $U^*Ux=x$ for $x \in \mbox{Ker}(I-U^*U)$, $U_0$ is also an isometry. Finally, let $V=U_0\oplus F_0$; since the images of $U_0$ and $F_0$ are mutually orthogonal, $V$ is an isometry. So, if we define $h_n=V^*e_n,\,n\in \Bbb N$, where $(e_n)$ is the canonical orthonormal basis for $l^2$, the sequence $(h_n)_{n=1}^{\infty}$ is a Parseval frame for $H$. Obviously, $F=V-U$ is a finite rank operator. Namely, $F$ acts trivially on $\mbox{Ker}(I-U^*U)$ and $\mbox{dim}\,(\mbox{Im}(I-U^*U))<\infty$. Put $g_n=F^*e_n,\,n \in \Bbb N$. Then
$(g_n)_{n=1}^{\infty}$ is a finite-dimensional perturbation of $(f_n)_{n=1}^{\infty}$ such that $(h_n=f_n+g_n)_{n=1}^{\infty}$ is a Parseval frame.

(b) $\Rightarrow$ (a).

If we assume (b) then $(g_n)_{n=1}^{\infty}$ is a Bessel sequence, as the difference of two Bessel sequences. Let $T$ denotes its analysis operator. By assumption, $T^*$ is a finite rank operator. Hence, $T$ has a finite rank as well. Since $(U^*+T^*)e_n=f_n+g_n,\,\forall n \in \Bbb N$, and
$(f_n+g_n)_{n=1}^{\infty}$ is a Parseval frame for $H$, $U+T$ is an isometry. Thus, $I=(U+T)^*(U+T)=U^*U+U^*T+T^*U+T^*T$. Let $F=U^*T+T^*U+T^*T$. Then $F$ is a finite rank operator and we have $I-U^*U=F$.

(a) $\Rightarrow$ (c).

Let $M=\mbox{Ker}(I-U^*U)$. By assumption, $M^{\perp}=\mbox{Im}(I-U^*U)$ is finite-dimensional. For $x \in M$ we have $x=U^*Ux$ {\em i.e.}
$x=\sum_{n=1}^{\infty}\la x,f_n\ra f_n$.

(c) $\Rightarrow$ (a).

Suppose we have $x=\sum_{n=1}^{\infty}\la x,f_n\ra f_n$ for all $x \in M$ with $\mbox{dim}\,M^{\perp}<\infty$. This implies $U^*Ux=x,\,\forall x \in M$. Hence, $M \subseteq \mbox{Ker}(I-U^*U)$, and this implies $\mbox{Im}(I-U^*U) \subseteq M^{\perp}$.
\end{proof}

\begin{remark}
(i) In contrast to Theorem \ref{drugi glavni}, a general assumption in the preceding theorem is that  $(f_n)_{n=1}^{\infty}$ is a frame, not merely a Bessel sequence. The reason for that is the proof of the above implication (a) $\Rightarrow$ (b) where we have used injectivity of the analysis operator $U$.

(ii) Note that in the property (c) we do not claim that the frame elements $f_n$ belong to $M$. In this light, we may say that  $(f_n)_{n=1}^{\infty}$ is, in a sense, an outer frame for the subspace $M$.
\end{remark}

\begin{remark}
Let us show an alternative proof of (a) $\Rightarrow$ (b) from the preceding theorem.

 Consider the decomposition $H=\mbox{Ker}(I-U^*U) \oplus \mbox{Im}(I-U^*U)$. Since $\mbox{Ker}(I-U^*U)$ is an eigenspace for the operator $U^*U$, its orthogonal complement is invariant for $U^*U$. So, both subspaces in the above decomposition are invariant for $U^*U$ and we may write
$U^*U=I\oplus \left[\begin{array}{ccc}\lambda_1& & \\
 &\ddots& \\
  & &\lambda_k\end{array}\right]$ with respect to an appropriately chosen orthonormal basis $(w_1,\ldots , w_k)$ for $\mbox{Im}(I-U^*U)$. Note that the optimal frame bounds of $(f_n)_{n = 1}^{\infty}$ satisfy $A \leq \lambda_i \le B, i = 1, \ldots, k$. Let us write, with respect to the same decomposition of $H$, $f_n=x_n+y_n,\,n \in \Bbb N$.

Now observe that both subspaces are also invariant for $(U^*U)^{-\frac{1}{2}}$ and that $(U^*U)^{-\frac{1}{2}}$ acts as the identity operator on $\mbox{Ker}(I-U^*U)$.

Put $\overline{f_n}=(U^*U)^{-\frac{1}{2}}f_n,\,n\in \Bbb N$. Recall that  $(\overline{f_n})_{n=1}^{\infty}$ is the Parseval frame canonically associated with $(f_n)_{n=1}^{\infty}$.
Obviously, for all $n\in \Bbb N$ we have
$\overline{f_n}=x_n+z_n$, where $z_n=\left[\begin{array}{ccc}\lambda_1^{-\frac{1}{2}}& & \\
 &\ddots& \\
  & &\lambda_k^{-\frac{1}{2}}\end{array}\right]y_n$. The sequences $(y_n)_{n=1}^{\infty}$ and $(z_n)_{n=1}^{\infty}$ belong to a finite-dimensional subspace $\mbox{Im}(I-U^*U)$, so does their difference and we have $f_n+(z_n-y_n)=x_n+y_n+z_n-y_n=\overline{f_n},\,\forall n \in \Bbb N$.
\end{remark}

\vspace{.2in}

\section{Frames with finite excess}

Let $(f_n)_{n=1}^{\infty}$ be a frame for $H$. The excess $e((f_n)_{n=1}^{\infty})$ is defined as the maximal number of elements that can be deleted from $(f_n)_{n=1}^{\infty}$ such that the reduced sequence remains a frame. If we denote by $U$ the analysis operator of $(f_n)_{n=1}^{\infty}$, it is well known that $e((f_n)_{n=1}^{\infty})=\mbox{dim}\,(\mbox{Ker}\,U^*)$. If we denote by $Q\in \Bbb B(l^2)$ the orthogonal projection to
$\mbox{Ker}\,U^*$, then $e((f_n)_{n=1}^{\infty})=\mbox{tr}(Q)$. For these and other facts concerning excess we refer the reader to \cite{CHet_al}.

When $e((f_n)_{n=1}^{\infty})$ is finite, we say that $(f_n)_{n=1}^{\infty}$ is a near-Riesz basis. It is proved in \cite{Holub} that a frame $(f_n)_{n=1}^{\infty}$ for $H$ is a near-Riesz basis if and only if there exists a finite set $J\subseteq \Bbb N$ such that $(f_n)_{n\in \Bbb N \setminus J}$ is a Riesz basis for $H$.

A direct application of Lemma \ref{tri} gives us a new characterization of near-Riesz bases.

\begin{theorem}\label{peti glavni}
Let $(f_n)_{n=1}^{\infty}$ be a frame for $H$ with the analysis operator $U$. Then $(f_n)_{n=1}^{\infty}$ is a near-Riesz basis if there exists an operator $V\in \Bbb B(H,l^2)$ such that $I-VU^*$ is a compact operator.
\end{theorem}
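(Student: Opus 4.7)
The plan is to observe that this theorem is essentially Lemma \ref{tri} applied to the adjoint $U^*$ rather than to $U$ itself. Since $U:H\to l^2$ is the analysis operator, $U^*\in\Bbb B(l^2,H)$, and the hypothesis of the theorem supplies an operator $V\in\Bbb B(H,l^2)$ such that $I-VU^*$ is compact on $l^2$. To line this up with Lemma \ref{tri}, I would set $W=V^*\in\Bbb B(l^2,H)$ and rewrite the hypothesis as $I-W^*U^*$ being compact. This is exactly the form appearing in Lemma \ref{tri}, with the roles of the two Hilbert spaces in that lemma played by $l^2$ (the domain ``$H$'' of the lemma) and $H$ (the codomain ``$K$''), and with $U^*$ playing the role of ``$U$''.

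Applying Lemma \ref{tri} then yields, among other things, $\dim(\mbox{Ker}\,U^*)<\infty$. The final step is to quote the fact recalled at the start of the section, namely $e((f_n)_{n=1}^{\infty})=\dim(\mbox{Ker}\,U^*)$; finiteness of the latter is the definition of $(f_n)_{n=1}^{\infty}$ being a near-Riesz basis, which completes the argument.

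I do not anticipate a real obstacle here. The one thing to double-check is that Lemma \ref{tri} is stated with enough generality (arbitrary Hilbert spaces $H$ and $K$, arbitrary bounded $U$) to permit the swap of domain and codomain; a glance at its statement confirms this. The frame hypothesis on $(f_n)_{n=1}^{\infty}$ is not needed for the compactness-to-finite-excess implication itself---it is only used to give the conclusion its intended meaning via the standard identification of excess with $\dim(\mbox{Ker}\,U^*)$.
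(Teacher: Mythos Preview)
Your argument is correct and is essentially identical to the paper's: both simply apply Lemma \ref{tri} with $U^*$ in place of $U$ (and $V^*$ in place of $V$) to conclude $\dim(\mbox{Ker}\,U^*)<\infty$, and then invoke the identity $e((f_n)_{n=1}^{\infty})=\dim(\mbox{Ker}\,U^*)$ recalled at the start of the section. The paper's proof is just the one-line version of what you wrote.
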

\begin{proof}
By Lemma \ref{tri}, $\mbox{Ker}\,U^*$ is a finite-dimensional subspace od $l^2$.
\end{proof}

\vspace{.1in}

Since there is a bijective correspondence between bounded operators in $\Bbb B(H,l^2)$ and Bessel sequences in $H$, one can restate the preceding theorem in terms of the existence of a Bessel sequence $(g_n)_{n=1}^{\infty}$ with the analysis operator $V$ such that $I-VU^*$ is compact. By taking adjoints we conclude that $I-UV^*$ is also compact, and it follows that $\mbox{Ker}\,V^*$ is a finite-dimensional subspace od $l^2$. However, this is not enough to conclude that $e((g_n)_{n=1}^{\infty})<\infty$ since for Bessel sequences we only know that $e((g_n)_{n=1}^{\infty})\geq \mbox{dim}\,\mbox{Ker}\,V^*$ (see \cite{CHet_al}).

This was also the reason for stating the theorem for frames, not for Bessel sequences.

After all, we have the following corollary.

\begin{corollary}\label{sesti glavni}
Let $(f_n)_{n=1}^{\infty}$ and $(g_n)_{n=1}^{\infty}$ be frames for $H$ with the analysis operators $U$ and $V$. If the operator $I-VU^*$ is compact, both $(f_n)_{n=1}^{\infty}$ and $(g_n)_{n=1}^{\infty}$ are near-Riesz bases.
\end{corollary}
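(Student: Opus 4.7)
The plan is very short: the statement is essentially a symmetric double application of Theorem \ref{peti glavni}, using that compactness is preserved under taking adjoints. The only subtle point, already flagged in the paragraph preceding the corollary, is that one must have both sequences be frames (not merely Bessel) so that the excess equals the dimension of the kernel of the adjoint analysis operator.

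For the first half, I would apply Theorem \ref{peti glavni} directly to the frame $(f_n)_{n=1}^{\infty}$. Its analysis operator is $U$, and the hypothesis gives us an operator $V\in\Bbb B(H,l^2)$ (namely the analysis operator of the frame $(g_n)_{n=1}^{\infty}$) such that $I-VU^*$ is compact. Theorem \ref{peti glavni} therefore yields $\dim(\mbox{Ker}\,U^*)<\infty$, i.e., $(f_n)_{n=1}^{\infty}$ has finite excess and hence is a near-Riesz basis.

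For the second half, I would use the observation that the adjoint of a compact operator is compact: from the compactness of $I-VU^*$ we obtain compactness of $(I-VU^*)^*=I-UV^*$. Now $(g_n)_{n=1}^{\infty}$ is a frame with analysis operator $V$, and $U\in\Bbb B(H,l^2)$ plays the role of the witness operator in Theorem \ref{peti glavni}; the theorem then yields $\dim(\mbox{Ker}\,V^*)<\infty$, so $(g_n)_{n=1}^{\infty}$ is likewise a near-Riesz basis.

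There is no real obstacle, but the point worth emphasizing (and the reason the corollary is stated for frames on both sides) is the one highlighted in the discussion just before the corollary: for a Bessel sequence that is not known to be a frame, one only has $e\ge\dim\mbox{Ker}\,V^*$, so finiteness of $\dim\mbox{Ker}\,V^*$ would not suffice to conclude finite excess. Assuming $(g_n)_{n=1}^{\infty}$ is a frame upgrades this inequality to an equality and closes the argument.
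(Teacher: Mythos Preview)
Your argument is correct and matches the paper's own reasoning: the corollary is stated without a separate proof because the preceding paragraph already spells out exactly the steps you give (apply Theorem~\ref{peti glavni} to $(f_n)$, take adjoints to obtain compactness of $I-UV^*$, apply the theorem again to $(g_n)$, and use the frame hypothesis so that the excess coincides with $\dim\mbox{Ker}\,V^*$). There is nothing to add.
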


\vspace{.1in}

Consider again a frame $(f_n)_{n=1}^{\infty}$ for $H$ with the analysis operator $U$. Recall that $(U^*)^{\dag}=U(U^*U)^{-1}$, since $U^*$ is a surjection. Hence, $U(U^*U)^{-1}U^*$ is the orthogonal projection to $\mbox{Im}\,U$ and, consequently, $I-U(U^*U)^{-1}U^*$ is the orthogonal projection to $(\mbox{Im}\,U)^{\perp}=\mbox{Ker}\,U^*$.

Suppose now that $e((f_n)_{n=1}^{\infty})<\infty$. Then we have $\mbox{dim}\,(\mbox{Ker}\,U^*)<\infty$; in other words, $I-U(U^*U)^{-1}U^*$ is a finite rank operator. Finally, observe that $U(U^*U)^{-1}$ is in fact the analysis operator of the canonical dual frame $(\tilde{f_n}=(U^*U)^{-1}f_n)_{n=1}^{\infty}$. This, together with the above corollary, shows that we have the following characterization of near-Riesz bases among all frames.

\begin{corollary} \label{sedmi glavni}
Let $(f_n)_{n=1}^{\infty}$ be a frame for $H$ with the analysis operator $U$. Then $(f_n)_{n=1}^{\infty}$ is a near-Riesz basis if and only if there exists a frame $(g_n)_{n=1}^{\infty}$ for $H$ with the analysis operator $V$ such that $I-VU^*$ is a compact operator.
\end{corollary}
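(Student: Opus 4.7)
The plan is to assemble the result directly from Corollary \ref{sesti glavni} together with the observation, made in the paragraph preceding the corollary, that the canonical dual frame always does the job in one direction.

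For the ``if'' direction, I would simply invoke Corollary \ref{sesti glavni}: if $(g_n)_{n=1}^{\infty}$ is a frame for $H$ with analysis operator $V$ and $I-VU^*$ is compact, then the corollary asserts that both $(f_n)_{n=1}^{\infty}$ and $(g_n)_{n=1}^{\infty}$ are near-Riesz bases. So nothing more is needed here, and in particular $(f_n)_{n=1}^{\infty}$ has finite excess.

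For the ``only if'' direction, assume $e((f_n)_{n=1}^{\infty})<\infty$, i.e.~$\dim\,(\mbox{Ker}\,U^*)<\infty$. I would take $(g_n)_{n=1}^{\infty}$ to be the canonical dual frame $(\tilde{f_n}=(U^*U)^{-1}f_n)_{n=1}^{\infty}$, whose analysis operator is $V=U(U^*U)^{-1}$ (as noted in the preceding discussion, this follows because $(U^*)^{\dag}=U(U^*U)^{-1}$). Then
\[
VU^*=U(U^*U)^{-1}U^*,
\]
which is exactly the orthogonal projection onto $\mbox{Im}\,U$; consequently $I-VU^*$ is the orthogonal projection onto $(\mbox{Im}\,U)^{\perp}=\mbox{Ker}\,U^*$. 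By assumption this projection has finite rank, hence is compact, which produces the required frame $(g_n)_{n=1}^{\infty}$.

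There is essentially no obstacle here; the content of the corollary lies entirely in Corollary \ref{sesti glavni} (itself a consequence of Lemma \ref{tri}) and in the identification of $U(U^*U)^{-1}$ as the analysis operator of the canonical dual. The only care required is to confirm that $(g_n)_{n=1}^{\infty}$ produced in the ``only if'' direction is indeed a frame, but this is standard since the canonical dual of a frame is always a frame.
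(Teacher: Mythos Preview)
Your proposal is correct and follows essentially the same approach as the paper: the ``if'' direction is Corollary \ref{sesti glavni}, and the ``only if'' direction uses the canonical dual frame with analysis operator $V=U(U^*U)^{-1}$, so that $I-VU^*$ is the finite-rank orthogonal projection onto $\mbox{Ker}\,U^*$. This is exactly the argument laid out in the discussion preceding the corollary.
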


Notice a formal similarity of Corollary \ref{sedmi glavni} and Corollary \ref{cetiri} (see also Definition \ref{ess dual}). The characterizing condition is formulated in the same way and the only difference is the order of the action of the analysis and the synthesis operators involved.

\vspace{.1in}

Recall from Proposition 5.5. in \cite{CHet_al} that for each frame $(f_n)_{n=1}^{\infty}$ we have $e((f_n)_{n=1}^{\infty})=\sum_{n=1}^{\infty}(1-\|\overline{f_n}\|^2)$, where
 $(\overline{f_n})_{n=1}^{\infty}$ is the Parseval frame canonically associated with $(f_n)_{n=1}^{\infty}$.
 In particular, if $e((f_n)_{n=1}^{\infty})<\infty$, this implies $\sum_{n=1}^{\infty}(1-\|\overline{f_n}\|^2)<\infty$. In the special case when
 $(f_n)_{n=1}^{\infty}$ is a Parseval frame with finite excess, we have $\sum_{n=1}^{\infty}(1-\|{f_n}\|^2)<\infty$.

 In this light it is natural to ask: can we describe those frames $(f_n)_{n=1}^{\infty}$ with the upper frame bound $B$ for which the series
 $\sum_{n=1}^{\infty}(B-\|{f_n}\|^2)$ converges?

 \begin{proposition}\label{convergence}
 Let $(f_n)_{n=1}^{\infty}$ be a frame for $H$ with the optimal upper frame bound $B$ such that the series
 $\sum_{n=1}^{\infty}(B-\|{f_n}\|^2)$ converges. Then $(f_n)_{n=1}^{\infty}$ is a near-Riesz basis.
 \end{proposition}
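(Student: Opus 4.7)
My plan is to recognize the hypothesis as a trace condition on a positive operator on $l^2$, and then deduce that $\mbox{dim}\,\mbox{Ker}\,U^*<\infty$, which is the definition of a near-Riesz basis.

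First, I would observe that for each $n$, $\|f_n\|^2=\|U^*e_n\|^2=\la UU^*e_n,e_n\ra$, so the hypothesis rewrites as
$$\sum_{n=1}^{\infty}(B-\|f_n\|^2)=\sum_{n=1}^{\infty}\la (B\cdot I_{l^2}-UU^*)e_n,e_n\ra<\infty.$$
Because $B$ is the optimal upper frame bound, $\|UU^*\|=\|U^*U\|=B$, so the self-adjoint operator $B\cdot I_{l^2}-UU^*$ is positive (in particular every term in the sum is nonnegative). The displayed expression is then exactly the trace of $B\cdot I_{l^2}-UU^*$ with respect to the canonical orthonormal basis, so $B\cdot I_{l^2}-UU^*$ is trace class and, in particular, compact.

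Next, I would invoke the standard identity $\mbox{Ker}\,U^*=\mbox{Ker}\,UU^*$, and note that the right-hand side is precisely the eigenspace of the compact self-adjoint operator $B\cdot I_{l^2}-UU^*$ at the eigenvalue $B$. Since $(f_n)_{n=1}^{\infty}$ is a frame we have $B\geq A>0$, and a nonzero eigenvalue of a compact operator has finite-dimensional eigenspace. Therefore $e((f_n)_{n=1}^{\infty})=\mbox{dim}\,\mbox{Ker}\,U^*<\infty$, which says that $(f_n)_{n=1}^{\infty}$ is a near-Riesz basis.

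The decisive step is the trace identification in the first paragraph; once the hypothesis is converted into compactness of $B\cdot I_{l^2}-UU^*$, the rest is a routine application of the spectral theorem for compact self-adjoint operators. I do not anticipate any real obstacle, the only point to be careful about being the positivity $B\cdot I_{l^2}-UU^*\geq 0$, which comes directly from $\|UU^*\|=B$ and positivity of $UU^*$.
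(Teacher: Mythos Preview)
Your proof is correct and follows essentially the same route as the paper: both recognize the hypothesis as the statement that $B\cdot I_{l^2}-UU^*$ (equivalently, after normalizing, $I-U_1U_1^*$ with $U_1=\frac{1}{\sqrt{B}}U$) is a positive trace-class, hence compact, operator on $l^2$, and then conclude that $\mbox{Ker}\,U^*$ is finite-dimensional. The only cosmetic difference is in the last step, where the paper invokes its Atkinson-type criterion (Theorem~\ref{peti glavni}) while you argue directly that the eigenspace of a compact operator at the nonzero eigenvalue $B$ is finite-dimensional.
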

\begin{proof}
 We have to show that $e((f_n)_{n=1}^{\infty})<\infty$. Let $g_n=\frac{1}{\sqrt{B}}f_n,\,n\in \Bbb N$. Then $(g_n)_{n=1}^{\infty}$ is a frame
 for $H$ with the same excess. Its analysis operator is given by $U_1=\frac{1}{\sqrt{B}}U$ where $U$ is the analysis operator of
 $(f_n)_{n=1}^{\infty}$.

 Consider the canonical orthonormal basis $(e_n)$ for $l^2$. Then for all $n$ we have $$\la (I-U_1U_1^*)e_n,e_n\ra=1-\|U_1^*e_n\|^2=1-\|g_n\|^2=1-\frac{1}{B}\|f_n\|^2$$
 $$=
 \frac{1}{B}(B-\|f_n\|^2).$$ From this we get $$\mbox{tr}(I-U_1U_1^*)=\sum_{n=1}^{\infty}\la (I-U_1U_1^*)e_n,e_n\ra=\sum_{n=1}^{\infty}\frac{1}{B}(B-\|f_n\|^2)<\infty.$$ Thus, $I-U_1U_1^*$ is a trace class operator and hence compact. By Theorem \ref{peti glavni}, $(g_n)_{n=1}^{\infty}$ is a near-Riesz basis.
 \end{proof}

 \begin{remark}
 It is easy to see that for each frame $(f_n)_{n=1}^{\infty}$ we have $1-\|\overline{f_n}\|^2 \leq 1-\frac{1}{B}\|f_n\|^2,\,\forall n\in \Bbb N$.
 This, together with Proposition 5.5 from \cite{CHet_al}, serves as an alternative proof of Proposition \ref{convergence}. We omit the details.
 \end{remark}

 \vspace{.1in}

 In general, the converse of Proposition \ref{convergence} is not true. As an example, consider the canonical orthonormal basis $(e_n)$ for $l^2$ and the sequence $e_1,e_1,e_2,e_3,e_4,\ldots$. This is a frame with finite excess (equal to $1$) and the optimal upper frame bound $B=2$, but the series $\sum_{n=1}^{\infty}(2-\|f_n\|^2)$ diverges.

 Therefore, in order to ensure the convergence, one should add an extra condition.

 \begin{proposition}
 Let $(f_n)_{n=1}^{\infty}$ be a frame for $H$ with the optimal upper frame bound $B$ and the analysis operator $U$ such that $e((f_n)_{n=1}^{\infty})<\infty$ and $\mbox{dim}\,(\mbox{Im}( B \cdot I - U^* U ))<\infty$. Then $\sum_{n=1}^{\infty}(B-\|{f_n}\|^2)<\infty$.
 \end{proposition}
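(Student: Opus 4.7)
The natural starting point is to recognise the series as a sum of diagonal entries of a concrete operator on $l^2$. Writing $(e_n)$ for the canonical orthonormal basis of $l^2$, I have $\|f_n\|^2 = \|U^*e_n\|^2 = \la UU^*e_n,e_n\ra$, so
\[
B - \|f_n\|^2 = \la (BI_{l^2} - UU^*)e_n,e_n\ra.
\]
Since $B = \|U^*U\| = \|UU^*\|$, the operator $T:=BI_{l^2} - UU^*$ is positive on $l^2$. Hence the series in question is a series of non-negative numbers equal to $\sum_n \la Te_n,e_n\ra$. Therefore it suffices to show that $T$ is trace class, for which it is more than enough to show that $T$ has finite rank, and then the sum equals $\mbox{tr}(T) < \infty$.

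To show $T$ has finite rank I would use the polar decomposition $U = V(U^*U)^{1/2}$, where $V \in \Bbb B(H,l^2)$ is a partial isometry. Since $(f_n)_{n=1}^{\infty}$ is a frame, $U$ is injective, so the initial space of $V$ is all of $H$, which gives $V^*V = I_H$ and $VV^* = P_{\mbox{Im}\,U} = I_{l^2} - P_{\mbox{Ker}\,U^*}$. A short computation yields $UU^* = V(U^*U)V^*$, and then
\[
T = BI_{l^2} - UU^* = B(I_{l^2} - VV^*) + V(BI_H - U^*U)V^* = B\cdot P_{\mbox{Ker}\,U^*} + V(BI_H - U^*U)V^*.
\]

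Both summands are manifestly of finite rank: the first has rank $\dim\,\mbox{Ker}\,U^* = e((f_n)_{n=1}^{\infty}) < \infty$ by the hypothesis that the excess is finite, and the second has rank at most $\dim\,\mbox{Im}(BI_H - U^*U) < \infty$ by the second hypothesis. Consequently $T$ is a positive finite-rank (hence trace class) operator, and
\[
\sum_{n=1}^{\infty}(B-\|f_n\|^2) = \sum_{n=1}^{\infty}\la Te_n,e_n\ra = \mbox{tr}(T) < \infty.
\]
I do not foresee any serious obstacle; the only point that needs a brief justification is the identity $UU^* = V(U^*U)V^*$, which follows immediately from the polar decomposition together with $V^*V = I_H$.
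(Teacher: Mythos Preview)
Your argument is correct and takes a genuinely different route from the paper. The paper normalises to upper bound $1$, then invokes its Theorem~2.8 (Theorem~\ref{drugi glavni}) to extend $(g_n)$ to a Parseval frame by finitely many vectors, and finally applies the excess formula $e = \sum_n (1-\|h_n\|^2)$ for Parseval frames (Proposition~5.5 in \cite{CHet_al}) to the extended sequence. Your approach is more direct and self-contained: you bypass both the extension theorem and the cited excess formula by recognising $\sum_n (B-\|f_n\|^2)$ as the trace of $T=BI_{l^2}-UU^*$ and then using the polar decomposition to split $T$ as $B\cdot P_{\mathrm{Ker}\,U^*}+V(BI_H-U^*U)V^*$, two pieces that are finite-rank precisely by the two hypotheses. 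This not only avoids the external reference but also makes transparent how each hypothesis contributes a summand, and in fact yields the explicit value $\mathrm{tr}(T)=B\cdot e((f_n))+\mathrm{tr}(BI_H-U^*U)$. The paper's proof, on the other hand, illustrates how the proposition fits naturally into the Parseval-extension framework developed earlier in the paper.
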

\begin{proof}
Let $g_n=\frac{1}{\sqrt{B}}f_n,\,n\in \Bbb N$. Then $(g_n)_{n=1}^{\infty}$ is a frame
 for $H$ with the same excess and the optimal upper frame bound equal to $1$. Its analysis operator is given by $U_1=\frac{1}{\sqrt{B}}U$, so
 $I-U_1^*U_1=\frac{1}{B}(B\cdot I-U^*U)$. Thus, by assumption, we have $\mbox{dim}\,(\mbox{Im}(I-U_1^*U_1))<\infty$. By Theorem \ref{drugi glavni},
 there exists a finite sequence $(x_n)_{n=1}^{k}$ such that $(x_n)_{n=1}^{k}\cup (g_n)_{n=1}^{\infty}$ is a Parseval frame for $H$.
 Since by assumption $(f_n)_{n=1}^{\infty}$ has finite excess, the excess of this new frame is also finite. Since this new frame is Parseval, Proposition 5.5 from \cite{CHet_al} implies $\sum_{n=1}^k(1-\|{x_n}\|^2)+\sum_{n=1}^{\infty}(1-\|{g_n}\|^2)<\infty$. From this we conclude
 $\sum_{n=1}^{\infty}(1-\|{g_n}\|^2)<\infty$. In terms of the original frame this means $\sum_{n=1}^{\infty}(1-\frac{1}{B}\|{f_n}\|^2)<\infty$ which after multiplication by $B$ gives us $\sum_{n=1}^{\infty}(B-\|{f_n}\|^2)<\infty$. 
 \end{proof}

\vspace{.1in}

 We end the paper with a comment concerning extensions of frames to Parseval frames. Recall from Remark \ref{nejedinstvenost} that a finite extension of a Bessel sequence to a Parseval frame (if it exists) is not unique. However, if we have a frame with finite excess that admits a finite extension to a Parseval frame, then the following corollary tells us that all such extensions have the same energy ({\em i.e.} $l^2$-norm).

\begin{corollary}\label{zadnji}
 Let $(f_n)_{n=1}^{\infty}$ be a frame for $H$ with the optimal upper frame bound equal to $1$ and the analysis operator $U$. Suppose that $e((f_n)_{n=1}^{\infty})<\infty$ and
 $\mbox{dim}\,(\mbox{Im}( I-U^*U))<\infty$. Let $(x_n)_{n=1}^{k}$ be any finite sequence in $H$ such that $(x_n)_{n=1}^{k} \cup (f_n)_{n=1}^{\infty}$ is a Parseval frame. Then
 $$
 \sum_{n=1}^k\|x_n\|^2=\sum_{n=1}^{\infty}(1-\|{f_n}\|^2)-e((f_n)_{n=1}^{\infty}).
 $$
\end{corollary}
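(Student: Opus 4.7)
The plan is to compare the excess formulas applied to the original frame $(f_n)_{n=1}^\infty$ and to the Parseval extension $(x_n)_{n=1}^k \cup (f_n)_{n=1}^\infty$, and recover the stated identity by subtraction. The key auxiliary computation is the excess of the extension in terms of the excess of the original frame.

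First I would argue that the extension has finite excess equal to $k + e((f_n)_{n=1}^{\infty})$. Let $U_1$ be the analysis operator of $(x_n)_{n=1}^k \cup (f_n)_{n=1}^\infty$ and $F$ the analysis operator of $(x_n)_{n=1}^k$, so that $U_1 = F + S^k U$ as in the earlier sections. Since $(f_n)_{n=1}^\infty$ is a frame, $U^*$ is surjective, hence for every $c=(c_1,\ldots,c_k)\in \mathbb{C}^k$ there exists $d\in l^2$ with $U^*d = -F^*c$. This shows that the projection $(c,d)\mapsto c$ from $\mathrm{Ker}\,U_1^*$ onto $\mathbb{C}^k$ is surjective, with kernel $\{0\}\oplus \mathrm{Ker}\,U^*$. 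Hence
\begin{equation*}
e((x_n)_{n=1}^k \cup (f_n)_{n=1}^\infty)=\dim \mathrm{Ker}\,U_1^* = k + \dim \mathrm{Ker}\,U^* = k + e((f_n)_{n=1}^{\infty}).
\end{equation*}

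Next I would invoke Proposition 5.5 from \cite{CHet_al}. Applied to the Parseval extension, whose excess we just showed equals $k + e((f_n)_{n=1}^{\infty}) < \infty$, this proposition gives
\begin{equation*}
k + e((f_n)_{n=1}^{\infty}) = \sum_{n=1}^k (1 - \|x_n\|^2) + \sum_{n=1}^{\infty}(1 - \|f_n\|^2).
\end{equation*}
The first sum on the right equals $k - \sum_{n=1}^k \|x_n\|^2$; substituting this, the $k$'s cancel, and solving for $\sum_{n=1}^k \|x_n\|^2$ yields the desired identity.

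The only nontrivial step is the excess computation for the extension; everything else is bookkeeping. I should be careful to note that the hypotheses of the corollary guarantee at least one such Parseval extension exists (by Theorem \ref{drugi glavni}, since the optimal upper bound is $1$ and $\dim \mathrm{Im}(I-U^*U)<\infty$), and that the two finiteness assumptions ($e((f_n)_{n=1}^{\infty})<\infty$ and $\sum_{n=1}^\infty(1-\|f_n\|^2)$ convergent, the latter following from Proposition \ref{convergence} style arguments via the preceding proposition) ensure both sides of the final equality are finite, so the cancellation of $k$'s is legitimate.
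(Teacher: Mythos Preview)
Your proof is correct and follows the same overall structure as the paper's: compute the excess of the Parseval extension, apply Proposition~5.5 from \cite{CHet_al}, and solve for $\sum_{n=1}^k\|x_n\|^2$. The difference lies in how you establish $e\bigl((x_n)_{n=1}^k \cup (f_n)_{n=1}^\infty\bigr) = k + e\bigl((f_n)_{n=1}^\infty\bigr)$. The paper invokes Holub's Theorem~3.1: since $(f_n)$ is a near-Riesz basis with excess $e$, there is a set $J$ of size $e$ such that $(f_n)_{n\in\Bbb N\setminus J}$ is a Riesz basis, and then the extension is that Riesz basis with $k+e$ extra vectors adjoined, so its excess is $k+e$. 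Your argument is more self-contained: you compute $\dim\mathrm{Ker}\,U_1^*$ directly via the short exact sequence $0 \to \mathrm{Ker}\,U^* \to \mathrm{Ker}\,U_1^* \to \Bbb C^k \to 0$, using surjectivity of $U^*$ (which holds because $(f_n)$ is a frame) for exactness on the right. This avoids the external appeal to Holub's result.

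One small cleanup: your closing remark about finiteness is slightly tangled. You do not need the preceding proposition (nor Proposition~\ref{convergence}) to justify that $\sum_{n=1}^\infty(1-\|f_n\|^2)$ is finite. Once you know the Parseval extension has finite excess $k+e$, Proposition~5.5 of \cite{CHet_al} already gives $\sum_{n=1}^k(1-\|x_n\|^2)+\sum_{n=1}^\infty(1-\|f_n\|^2)=k+e<\infty$, and since the first piece has only $k$ terms, convergence of the second is automatic.
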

\begin{proof}
Let $e((f_n)_{n=1}^{\infty})=e<\infty$. Since $(f_n)_{n = 1}^{\infty}$ is a near--Riesz basis, there exists a finite set $J \subseteq \Bbb N$ such that $(f_n)_{n \in \Bbb N \setminus J}$ is a Riesz basis. Theorem~$3.1.$ in~\cite{Holub} tells us that the number of vectors we have to remove from a near--Riesz basis in order to get a Riesz basis is equal to the excess of that near--Riesz basis. Therefore $|J| = e$ and using the same argument for the near--Riesz basis $(x_n)_{n = 1}^{k} \cup (f_n)_{n = 1}^{\infty}$ we get
$$
e \left( (x_n)_{n = 1}^{k} \cup (f_n)_{n = 1}^{\infty} \right) = e \left( (x_n)_{n = 1}^{k} \cup (f_n)_{n \in J} \cup (f_n)_{n \in \Bbb N \setminus J} \right) = k + |J| = k + e.
$$


On the other hand, since $(x_n)_{n=1}^{k} \cup (f_n)_{n=1}^{\infty}$ is a Parseval frame, Proposition 5.5 from \cite{CHet_al} implies
$e((x_n)_{n=1}^{k} \cup (f_n)_{n=1}^{\infty})=\sum_{n=1}^{k}(1-\|{x_n}\|^2)+\sum_{n=1}^{\infty}(1-\|{f_n}\|^2)$.

By comparing these two equalities, we obtain $k+e=\sum_{n=1}^{k}(1-\|{x_n}\|^2)+\sum_{n=1}^{\infty}(1-\|{f_n}\|^2)$ which gives us
$\sum_{n=1}^k\|x_n\|^2=\sum_{n=1}^{\infty}(1-\|{f_n}\|^2)-e$.
\end{proof}

\vspace{.2in}

\bibliographystyle{amsplain}

\end{document}